\title{On-line Ramsey numbers}
\author{David Conlon\thanks{St John's College, Cambridge, CB2 1TP, United Kingdom. E-mail: {\tt D.Conlon@dpmms.cam.ac.uk}. Supported by a Junior Research Fellowship at St John's College, Cambridge.}}
\date{}
\newenvironment{proof}
      {\medskip\noindent{\bf Proof.}\hspace{1mm}}
      {\hfill$\Box$\medskip}
\def\qed{\ifvmode\mbox{ }\else\unskip\fi\hskip 1em plus 10fill$\Box$}
\newtheorem{theorem}{Theorem}[section]
\newtheorem{lemma}[theorem]{Lemma}
\newtheorem{problem}[theorem]{Problem}
\begin{document}

\maketitle

\begin{abstract}
Consider the following game between two players, Builder and Painter. Builder draws edges one at a time and Painter colours them, in either red or blue, as each appears. Builder's aim is to force Painter to draw a monochromatic copy of a fixed graph $G$. The minimum number of edges which Builder must draw, regardless of Painter's strategy, in order to guarantee that this happens is known as the on-line Ramsey number $\tilde{r}(G)$ of $G$. Our main result, relating to the conjecture that $\tilde{r}(K_t) = o(\binom{r(t)}{2})$,
is that there exists a constant $c > 1$ such that 
\[\tilde{r}(K_t) \leq c^{-t} \binom{r(t)}{2}\]
for infinitely many values of $t$. We also prove a more specific upper bound for this number, showing that there exists a constant $c$ such that
\[\tilde{r}(K_t) \leq t^{-c \frac{\log t}{\log \log t}} 4^t.\]
Finally, we prove a new upper bound for the on-line Ramsey number of the complete bipartite graph $K_{t,t}$.
\end{abstract}

\section{Introduction}

Given a graph $G$, the Ramsey number $r(G)$ of $G$ is the smallest number $n$ such that, in any two-colouring of the edges of the complete graph $K_n$, there is guaranteed to be a monochromatic copy of $G$. In the specific case where $G = K_t$, we write $r(t)$ for the Ramsey number.

The size Ramsey number $\hat{r}(G)$, introduced by Erd\H{o}s, Faudree, Rousseau and Schelp \cite{EFRS78}, is the smallest number $m$ such that there exists a graph $H$ with $m$ edges which is Ramsey with respect to $G$, that is, no matter how one colours the edges of $H$ in two colours, red and blue say, one will always find a monochromatic copy of $G$.

For cliques, the size Ramsey number, written simply as $\hat{r}(t)$, is just the same as the number of edges in the complete graph on $r(t)$ vertices \cite{EFRS78}, that is
\[\hat{r}(t) = \binom{r(t)}{2},\]
a result attributed to Chv\'{a}tal.

More surprisingly, Beck proved \cite{B83} that there exists a constant $c$ such that
\[\hat{r}(P_n) \leq cn,\]
where $P_n$ is the path with $n$ vertices. Similar results have been proven when $G$ is a cycle with $n$ vertices \cite{HKL95} and when $G$ is a tree with $n$ vertices and a fixed maximum degree \cite{FP87}.

It is only recently that Kohayakawa, R\"{o}dl, Szemer\'edi and Schacht \cite{KRSS09} have managed to extend Beck's result to all graphs of bounded maximum degree. Their result states that for every natural number $\Delta$ there exists a constant $c$ such that the size Ramsey number of any graph $H$ with $n$ vertices and maximum degree $\Delta$ satisfies
\[\hat{r}(H) \leq c n^{2 - 1/\Delta} \log^{1/\Delta} n.\]
Even though this result is highly impressive, a very large gap still remains in this problem, the best lower bound, due to R\"{o}dl and Szemer\'edi \cite{RS00}, only being that there exists a graph on $n$ vertices with maximum degree 3 for which
\[\hat{r}(H) \geq c n \log^{1/60} n.\]
This does, however, rule out the possibility, raised by Beck and Erd\H{o}s (see \cite{CG98}), that for every $\Delta$ there exists a constant $c$ such that, for any graph $H$ with $n$ vertices and maximum degree $\Delta$, $\hat{r}(H) \leq c n$. 

An on-line variant of the size Ramsey number was introduced independently by Beck \cite{B93} and Kurek and Ruci\'nski \cite{KR05}. It is best described in terms of a game between two players, known as Builder and Painter. Builder draws a series of edges and, as each edge appears, Painter colours it, in either red or blue. Builder's goal is to force Painter to draw a monochromatic copy of a fixed graph $G$. The least number of edges which Builder must draw in order to force Painter, regardless of their strategy, to draw such a monochromatic copy of $G$ is known as the size Ramsey number $\tilde{r}(G)$ of $G$. As usual, $\tilde{r}(t)$ is the same as $\tilde{r}(K_t)$.

The basic conjecture in the field, attributed by Kurek and Ruci\'nski \cite{KR05} to R\"{o}dl and reiterated by several others \cite{FKRRT03, GHK04, GKP08, KK09, P308}, is to show that 
\[\lim_{t \rightarrow \infty} \frac{\tilde{r}(t)}{\hat{r}(t)} = 0,\]
that is, $\tilde{r}(t) = o(\binom{r(t)}{2})$. Our main result approaches this conjecture by showing that there is a subsequence $\{t_1, t_2, \cdots\}$ of the integers such that 
$\lim_{i \rightarrow \infty} \frac{\tilde{r}(t_i)}{\hat{r}(t_i)} = 0$.
In fact, we do even better, showing that there are infinitely many values of $t$ for which the on-line Ramsey number is exponentially smaller than the size Ramsey number.

\begin{theorem} \label{IntroMain}
There exists a constant $c > 1$ such that, for infinitely many $t$,
\[\tilde{r}(t) \leq c^{-t} \binom{r(t)}{2}.\]
\end{theorem}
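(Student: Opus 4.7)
The plan is to execute a natural on-line analogue of the Erd\H{o}s--Szekeres recursion. Writing $\tilde r(s,t) := \tilde r(K_s, K_t)$ for the off-diagonal on-line Ramsey number, Builder's strategy is: pick a fresh vertex $v$, and sequentially draw edges from $v$ to new vertices, classifying each according to Painter's response into the red-neighbourhood $R$ or the blue-neighbourhood $B$ of $v$. Builder halts this star phase the first time either $|R| = r(s-1, t)$ or $|B| = r(s, t-1)$, and then recurses inside the triggered neighbourhood, seeking either a red $K_{s-1}$ in $R$ (which combines with $v$ to give a red $K_s$) or a blue $K_{t-1}$ in $B$ (giving a blue $K_t$).

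Since at most $r(s-1, t) + r(s, t-1) - 1$ star edges can be drawn before some trigger fires, this strategy yields the recursion
\[
\tilde r(s, t) \;\leq\; r(s-1, t) + r(s, t-1) + \max\bigl(\tilde r(s-1, t),\, \tilde r(s, t-1)\bigr),
\]
with base case $\tilde r(2, t) = \binom{t}{2}$. A short induction on $s+t$, using the monotonicity $r(s',t') \leq r(s,t)$ for $s' \leq s$ and $t' \leq t$, gives $\tilde r(s, t) \leq 2(s+t)\, r(s,t)$, and in particular $\tilde r(t) \leq 4t \cdot r(t)$.

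Combining this with Erd\H{o}s' lower bound $r(t) \geq 2^{t/2}$, we obtain
\[
\frac{\tilde r(t)}{\binom{r(t)}{2}} \;\leq\; \frac{8t}{r(t)-1} \;\leq\; \frac{16\, t}{2^{t/2}},
\]
which is less than $c^{-t}$ for any fixed $c < \sqrt 2$ and all sufficiently large $t$, hence in particular for infinitely many $t$.

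The one technical subtlety is in ensuring that Builder's recursive sub-game can actually be executed inside the triggered neighbourhood $R$, which has only $r(s-1, t)$ vertices, using at most $\tilde r(s-1, t)$ edges. If the natural on-line strategy for $\tilde r(s-1, t)$ would require more vertices, the star-phase thresholds can be inflated by a constant factor (say doubled); this only changes the leading constant in $\tilde r(s,t) = O(s+t)\, r(s,t)$ and preserves the exponential saving over $\binom{r(t)}{2}$. Apart from this minor bookkeeping, the argument is routine.
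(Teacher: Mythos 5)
The recursion at the heart of your argument,
\[
\tilde r(s,t) \;\leq\; r(s-1,t) + r(s,t-1) + \max\bigl(\tilde r(s-1,t),\, \tilde r(s,t-1)\bigr),
\]
is not valid, and the difficulty you dismiss as ``minor bookkeeping'' is in fact the whole problem. The quantity $\tilde r(s-1,t)$ is the cost of a game played on an unlimited supply of fresh vertices; your strategy needs the sub-game to be played \emph{entirely inside} the set $R$ of size $r(s-1,t)$, since every vertex of the red $K_{s-1}$ must stay red-adjacent to $v$. There is no reason a strategy achieving $\tilde r(s-1,t)$ edges exists under that vertex restriction, and the known strategies certainly do not: each ``star'' round only guarantees a majority-type neighbourhood, so the usable pool shrinks by a constant factor per round, which forces the initial pool to be exponentially large in the number of rounds, and each round costs a number of edges proportional to the pool size. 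Unwinding this honestly gives bounds of the shape $t\cdot 2^{ct}\cdot p$ (as in the paper, roughly $t\,4^t$), not $O\bigl((s+t)\,r(s,t)\bigr)$. Inflating the thresholds by a constant factor cannot repair this, because the vertex requirement compounds multiplicatively with the recursion depth rather than by a bounded factor. A useful sanity check: your claimed bound $\tilde r(t)\leq 4t\,r(t)$ would prove $\tilde r(t)=o\bigl(\binom{r(t)}{2}\bigr)$ for \emph{all} large $t$, with exponential savings, i.e.\ it would settle R\"odl's conjecture outright by a two-line argument; the fact that the theorem is only stated for infinitely many $t$ is a signal that something deeper is needed.

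What the paper actually does is quite different. Builder runs a \emph{biased} neighbourhood-chasing phase on a large pool of $n \approx (2/\alpha)^{0.01t}(1-\alpha)^{-0.99t}p$ vertices, following a colour only when it retains a $(1-\alpha)$-fraction of the current set, and stops when the recorded colour string has either $0.99t$ letters of one colour or $0.01t$ of each; he then fills in a clique on $p = \max\bigl(r(0.01t,t),\, r(0.99t)\bigr)$ vertices. The off-diagonal term $r(0.01t,t)$ is exponentially smaller than $r(t)$ by Erd\H{o}s--Szekeres plus the lower bound $r(t)\geq \sqrt 2^{\,t}$, but for the diagonal term one needs $r(0.99t)\leq 1.001^{-t}r(t)$, which is not known for all $t$; the paper proves it for infinitely many $t$ by a telescoping (averaging) argument, since otherwise $r(t)$ would grow more slowly than $1.106^t$, contradicting $r(t)\geq \sqrt2^{\,t}$. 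That averaging step is precisely why the result holds only along a subsequence, and it has no counterpart in your proposal.
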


We will also prove a more specific bound for the on-line Ramsey number of the complete graph, bringing it in line with the recent bound for diagonal Ramsey numbers due to the author \cite{C09}. The former bound, due to Pra\l at \cite{P308}, was $\tilde{r}(t) \leq c \frac{4^t}{\sqrt{t}}$.

\begin{theorem} \label{IntroSpec}
There exists a constant $c$ such that
\[\tilde{r}(t) \leq t^{-c \frac{\log t}{\log \log t}} 4^t.\]
\end{theorem}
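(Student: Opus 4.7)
The plan is to translate the book-counting argument underlying the diagonal Ramsey bound $r(t) \leq t^{-c\log t/\log \log t} \binom{2t-2}{t-1}$ of \cite{C09} into the on-line setting, starting from Pra\l at's adaptation of the Erd\H{o}s--Szekeres strategy. Pra\l at's strategy can be phrased as follows: Builder maintains a red clique $R$, a blue clique $B$, and a ``live'' working set $W$ in which every vertex is red-joined to all of $R$ and blue-joined to all of $B$. Builder repeatedly picks a pivot $v\in W$ and queries every edge from $v$ to the rest of $W$; by pigeonhole, if $|W|$ is at least $r(|R|+1,t)+r(|R|,|B|+1)$, the majority colour class has the size required to update either $R$ or $B$ and continue. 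This alone recovers the bound $\tilde{r}(t)\leq c\,4^t/\sqrt{t}$.

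The improvement comes from forcing a red \emph{book} $B_k^s$ (a red $K_s$ together with $k$ common red neighbours) before attempting to grow the red clique one vertex at a time. The key on-line lemma I would establish states that, inside a working set $W$ of appropriate size, Builder can invest roughly $O(s\cdot|W|)$ extra edges to produce either a blue $K_b$ or a red book with spine of size $s$ and page set $W'\subseteq W$ of size $\Omega(|W|/2^s)$ still satisfying all prior constraints. Once such a book is assembled, the recursion profits: the next recursive subproblem inside $W'$ needs only a red clique of size $a-s$, so the required size of $W'$ drops from $r(a-1,b)$ to $r(a-s,b)$, a saving of a factor $\binom{a+b-2}{s-1}$ in the Erd\H{o}s--Szekeres estimate. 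I would set up the recursion for the lopsided on-line Ramsey numbers $\tilde{r}(a,b)$ and choose the book parameter $s$ at each level to match the parameters used by Conlon in the offline argument.

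Iterating the book-saving over $\Theta(\log t/\log\log t)$ levels accumulates a multiplicative improvement $t^{-c\log t/\log\log t}$ over the Pra\l at bound, and combining with $r(a,b)\leq\binom{a+b-2}{a-1}$ at $a=b=t$ yields the claimed $\tilde{r}(t)\leq t^{-c\log t/\log\log t}\,4^t$. The bookkeeping should follow Conlon's offline induction closely, with the on-line recursion tree replacing the density-increment iteration.

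The main obstacle is executing the book-forcing step on-line within the required edge budget. Offline, books are located by a simple counting argument inside a fixed colouring; on-line, Builder must \emph{create} them edge by edge while Painter adversarially chooses the colours, so each probe counts against the budget. The mechanism I have in mind is to query edges inside $W$ in controlled batches, at each stage applying pigeonhole to find a vertex whose red-degree in the current candidate page set is at least half, passing to that red neighbourhood, and extending the spine by one vertex. Balancing (i) the rate at which the candidate page set shrinks under this refinement, (ii) the size of book required to match Conlon's offline savings, and (iii) the accumulated edge cost so that it fits under $t^{-c\log t/\log\log t}4^t$, is the delicate part that will need careful parameter tuning.
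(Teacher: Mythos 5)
Your plan has a genuine gap at precisely the point you flag as ``delicate,'' and the gap is not merely a matter of parameter tuning. The on-line book lemma you propose --- spend $O(s\,|W|)$ edges to get either a blue $K_b$ or a red book with spine $s$ and page set of size $\Omega(|W|/2^s)$ --- gives no improvement at all: a red spine of $s$ vertices whose common red neighbourhood has shrunk by a factor $2^s$ is exactly what $s$ rounds of the ordinary pivot-and-take-the-majority-colour step already produce, so iterating it just reproves the Erd\H{o}s--Szekeres/Pra\l at bound $c\,4^t/\sqrt t$. The saving in the offline argument of \cite{C09} comes from obtaining books whose page sets are \emph{much larger} than the pigeonhole bound $2^{-s}|W|$, and that is extracted from a quasirandomness-versus-structure dichotomy applied to a colouring that already exists in full. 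On-line, Painter colours adaptively and every probed edge counts against the budget, so Builder cannot ``inspect'' the colouring to locate a good book or to detect a density increment without paying for it; you give no mechanism for this, and your batch-probing sketch again only delivers the factor-$2$ pigeonhole shrinkage. So as written the recursion never accumulates the claimed $t^{-c\log t/\log\log t}$ factor.

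The paper avoids this difficulty entirely, and you could too: no on-line analogue of the book argument is needed. Run the plain pivot strategy for only $m=\tfrac32 t$ steps, keeping track of how many pivots went red ($a$) and blue ($b$), and then have Builder simply draw a \emph{complete} graph on $p=\max_{a+b=3t/2} r(t-a,t-b)$ vertices inside the final common neighbourhood $V_m$; whatever Painter does there, the \emph{offline} bound of Theorem \ref{DiagonalRamsey} (used as a black box) guarantees a red $K_{t-a}$ or blue $K_{t-b}$, which joins to the corresponding pivots to give a monochromatic $K_t$. Taking $n=2^{3t/2}p$, the cost is at most $mn+\binom p2\leq t\,2^{3t/2+1}p+p^2$, and since the offline theorem (plus a direct binomial estimate in the lopsided range $t-a\leq\tfrac12(t-b)$) gives $p\leq t^{-c\log t/\log\log t}2^{t/2}$, both terms are at most $t^{-c'\log t/\log\log t}4^t$. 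The whole improvement is imported from the offline theorem through the choice of stopping time $3t/2$, which balances the two phases; your attempt to re-derive the offline improvement inside the on-line game is both unnecessary and, as it stands, unproved.
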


Finally, we consider the on-line Ramsey number of complete bipartite graphs. For the size Ramsey number of $K_{t,t}$, the bounds \cite{NR78, EFRS78, ER93} are surprisingly close, being  
\[\frac{1}{60} t^2 2^t < \hat{r}(K_{t,t}) < \frac{3}{2} t^3 2^t.\]
Nevertheless, an on-line Builder can force Painter to draw a monochromatic $K_{t,t}$ even sooner.

\begin{theorem} \label{IntroBip}
There exists a constant $c$ such that
\[\tilde{r}(K_{t,t}) \leq c 2^t t^{5/2} \log^{1/2} t.\]
\end{theorem}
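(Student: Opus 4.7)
The plan is for Builder to run a bipartite-style strategy paired with a Kővári--Sós--Turán-type counting argument. Builder reserves two disjoint vertex sets $U$ and $R$ with $|U|=m$ and $|R|=N$, the two parameters to be optimised at the end. In the main phase Builder simply draws every one of the $mN$ potential edges between $U$ and $R$, so the total edge cost of this phase is $mN$.

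Once Painter has coloured these bipartite edges, write $d^{+}_{u}$ and $d^{-}_{u}$ for the red and blue degrees of $u\in U$ into $R$, so that $d^{+}_{u}+d^{-}_{u}=N$. By convexity of $\binom{x}{t}$ in its upper argument, $\binom{d^{+}_{u}}{t}+\binom{d^{-}_{u}}{t}\ge 2\binom{N/2}{t}$ for each $u$; summing over $U$ and pigeonholing in the colour allows us to assume
\[\sum_{u\in U}\binom{d^{+}_{u}}{t}\;\ge\; m\binom{N/2}{t}.\]
For each $t$-subset $T\subseteq R$, let $f(T)$ be the number of $u\in U$ all of whose edges to $T$ are red. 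Then $\sum_{T}f(T)=\sum_{u}\binom{d^{+}_{u}}{t}$, so averaging over $T\in\binom{R}{t}$ gives some $T$ with $f(T)\ge m\binom{N/2}{t}/\binom{N}{t}$; as soon as this count is at least $t$, that $T$ together with any $t$ of the witnessing $u$'s forms a red $K_{t,t}$.

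Using the Stirling-type estimate $\binom{N/2}{t}/\binom{N}{t}\ge 2^{-t}\exp(-O(t^{2}/N))$, the sufficient condition becomes $m\ge t\cdot 2^{t}\cdot\exp(O(t^{2}/N))$, so the cost $mN$ is minimised naively at $N\asymp t^{2}$, yielding only $\tilde{r}(K_{t,t})\le O(2^{t}t^{3})$. This is worse than the claimed bound by a factor of $\sqrt{t/\log t}$, and closing that gap is the heart of the argument.

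To reach $c\,2^{t}t^{5/2}\log^{1/2}t$, the plan is to precede the main bipartite phase with an adaptive filtering step: Builder processes candidate vertices sequentially and, rather than querying all of $R$ at once, first queries only a small shared sample and then continues to query additional edges only for $u$'s whose induced colouring is sufficiently biased. A pigeonhole/Chernoff-type analysis over the candidate bias classes then allows Builder to restrict the KST counting to biased $u$'s, for which $\binom{d^{+}_{u}}{t}/\binom{N}{t}$ is larger than $2^{-t}$ by exactly the required $\sqrt{t/\log t}$ factor. The main obstacle is checking that Painter cannot neutralise this filtering by forcing every candidate to be essentially balanced, and that the cost of the filtering itself (the sampling queries) remains within budget; this is where careful tracking of the preprocessing edge count against the savings in the main phase must be balanced, and that accounting is what produces the polynomial factor $t^{5/2}\log^{1/2}t$.
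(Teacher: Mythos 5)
Your first phase (drawing a full bipartite graph and running the K\H{o}v\'ari--S\'os--Tur\'an count) is fine, but it only reaches $O(2^t t^3)$, as you say yourself, and the step that is supposed to recover the factor $\sqrt{t/\log t}$ is left as an unproven sketch. Worse, the mechanism you propose --- filtering for vertices whose colouring is ``sufficiently biased'' and invoking a Chernoff-type analysis --- does not work against an adversarial Painter: Painter can simply colour each candidate vertex's edges exactly half red and half blue, so no vertex is ever biased, the filtering saves nothing, and there is no randomness for a concentration argument to exploit. You flag precisely this as ``the main obstacle'' and do not resolve it, so the heart of the theorem (the improvement from $t^3$ to $t^{5/2}\log^{1/2}t$) is missing.

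The paper's route is different and is worth internalising: the saving comes not from finding biased vertices but from \emph{asymmetry in the targets} combined with a shifted density threshold, as in the author's bound for the bipartite Ramsey number \cite{C08}. In a first phase Builder joins a set $M$ of size $O(2^t\log t)$ to a set $N$ of size $O(t^2)$; by Lemma \ref{KST} some colour, say blue, yields a complete blue bipartite graph with about $t^2$ vertices $M'\subseteq M$ on one side and $t-2\log t$ vertices of $N$ on the other, at cost $O(2^t t^2\log t)$ edges. In the second phase Builder joins $M'$ to a fresh set $N'$ of size $\Theta(2^t\sqrt{t\log t})$. Now Builder only needs a blue $K_{t,\,2\log t}$ (which extends the $t-2\log t$ common blue neighbours to a blue $K_{t,t}$) or a red $K_{t,t}$. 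Taking the density dichotomy at $\tfrac12\pm\epsilon$ with $\epsilon\approx\frac{\log t-\log\log t}{4t}$ rather than at $\tfrac12$, Lemma \ref{KST} requires $|N'|\gtrsim(\tfrac12-\epsilon)^{-t}\cdot 2\log t$ in the blue case and $|N'|\gtrsim(\tfrac12+\epsilon)^{-t}\cdot t$ in the red case, and this choice of $\epsilon$ balances both at about $2^t\sqrt{t\log t}$; the second-phase cost is then $|M'||N'|=O(2^t t^{5/2}\log^{1/2}t)$, which dominates and gives the theorem. Without some idea of this kind --- trading a smaller required blue side against a tilted density threshold --- your outline cannot be completed as stated.
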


Throughout the paper, we have, for the sake of presentation, systematically omitted floor and ceiling signs. We have also made no serious attempt to optimise any of the constants in the statements and proofs. In particular, the constant $c$ that arises in our proof of Theorem \ref{IntroMain} is certainly not optimal, but we felt that any serious attempt to optimise it would serve only to mask the main ideas.

\section{Complete graphs}

It is natural to expect that the on-line Ramsey number of almost every graph $G$ is decidedly smaller than its size Ramsey number. However, while it has been proved \cite{KR05} that $\tilde{r}(s,t) = o(\binom{r(s,t)}{2})$ as $t \rightarrow \infty$ and $s$ remains fixed, a similar result is not yet known for the diagonal Ramsey number $r(t)$. In this section we make some progress in this direction by proving that there is a subsequence $\{t_1, t_2, \cdots \}$ of the integers for which 
\[\lim_{i \rightarrow \infty} \frac{\tilde{r}(t_i)}{\hat{r}(t_i)} = 0.\]
The reason this result only works for a subsequence is that in the course of the proof we need a result of the form
\[\frac{r(t+1)}{r(t)} \geq c > 1.\]
Unfortunately, though such a result is almost certainly true, it seems incredibly difficult to prove. We can, however, exploit the fact that it is true on average to prove our theorem. 

\begin{theorem} \label{Main}
For infinitely many $t$,
\[\tilde{r}(t) \leq 1.001^{-t} \binom{r(t)}{2}.\]
\end{theorem}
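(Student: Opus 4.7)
The plan is a two-phase Builder strategy combined with an averaging argument to select the desired subsequence of $t$'s.

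The averaging is driven by the Erdős lower bound $r(t) \geq (\sqrt{2})^t$. For any $\alpha < \sqrt{2}$ and any $\lambda > 0$, if $r(t) < \alpha^{\lambda t} \, r(\lceil (1-\lambda)t \rceil)$ were to hold for all $t \geq T_0$, then iterating the inequality along the geometric sequence of indices $t,\,(1-\lambda)t,\,(1-\lambda)^2 t,\ldots$ would give
\[r(t) \leq C \alpha^t \cdot r(\text{const}),\]
contradicting the Erdős bound once $(\alpha/\sqrt{2})^t \to 0$. Hence for any such $\alpha$ and $\lambda$, infinitely many $t$ satisfy $r(t) \geq \alpha^{\lambda t} \, r(\lceil (1-\lambda)t \rceil)$. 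These are the $t$'s on which we verify the theorem; we set $s = \lambda t$.

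For Builder's strategy, Phase~1 is a Chvátal--Erdős--Szekeres common-neighbour build-up. Starting from a vertex set of size $N$, Builder maintains a red clique $C_R$, a blue clique $C_B$, and a reservoir $R$ consisting of vertices joined in red to all of $C_R$ and in blue to all of $C_B$. At each step he picks $v \in R$, plays all $|R|-1$ edges from $v$ to $R \setminus \{v\}$, and adds $v$ to $C_R$ or $C_B$ according to the majority colour, shrinking $R$ to $v$'s matching-colour neighbourhood. After $s$ rounds, $|C_R|+|C_B|=s$ and $|R| \geq N/2^s$. In Phase~2, Builder plays every edge inside the final reservoir $R$. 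By Ramsey's theorem, $R$ contains a red $K_{t-|C_R|}$ or a blue $K_{t-|C_B|}$ provided $|R| \geq r(t-|C_R|, t-|C_B|)$, and combining this with the matching-colour seed yields a monochromatic $K_t$. To cover Painter's worst balancing $(|C_R|, |C_B|) = (s, 0)$, we choose $N = 2^s\, r(t-s, t)$.

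The total edge count is at most $2^{s+1} r(t-s, t) + \binom{r(t-s,t)}{2}$, with the Phase~2 term dominating, and the proof reduces to showing that this quantity is at most $1.001^{-t}\binom{r(t)}{2}$ for our chosen $t$'s. The main obstacle, and the delicate technical heart of the argument, is that the off-diagonal Ramsey number $r(t-s, t)$ appears in place of the more convenient $r(t-s)$: Painter's ability to unbalance Phase~1 forces Builder to budget for the worst case, in which the naive trivial bound $r(t-s, t) \leq r(t)$ yields no saving. One therefore needs to bootstrap from the averaging conclusion $r((1-\lambda)t) \leq \alpha^{-\lambda t} r(t)$ to an analogous bound on $r(t-s, t)$, either by strengthening Phase~1 so that $(|C_R|, |C_B|)$ is forced to be balanced (thereby replacing $r(t-s, t)$ by $r(t - s/2)$ or similar), or by combining the Erdős--Szekeres off-diagonal estimate $r(s_1, s_2) \leq \binom{s_1+s_2-2}{s_1-1}$ with the growth-rate slack supplied by the good subsequence. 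Once this key comparison is established, the constant $1.001$ falls out by choosing $\lambda$ small and $\alpha$ just below $\sqrt{2}$ so that $\alpha^{2\lambda} \geq 1.001$.
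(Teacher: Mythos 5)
Your averaging argument (iterate a hypothetical bound $r(t) \leq \alpha^{\lambda t} r((1-\lambda)t)$ and contradict $r(t) \geq \sqrt{2}^{\,t}$) is exactly the paper's, and it is fine. The gap is in the Builder strategy, and it is the gap you yourself point to but do not close. With the plain majority build-up, Painter simply keeps one colour, say red, in the (weak) majority at every step; then $(|C_R|,|C_B|) = (s,0)$ and Builder must budget for $p = r(t-s,t)$ with $s = \lambda t$ small. To finish you would need $r((1-\lambda)t,\,t) \leq 1.001^{-t} r(t)$ (or similar), and no such bound is available: the averaging conclusion only controls the \emph{diagonal} number $r((1-\lambda)t)$, which says nothing about $r((1-\lambda)t, t)$, and the Erd\H{o}s--Szekeres estimate gives $r((1-\lambda)t, t) \leq \binom{(2-\lambda)t}{t} \approx 4^t 2^{-\Theta(\lambda t)}$, which is far larger than $r(t)$ itself when $r(t)$ is anywhere near the lower bound $\sqrt{2}^{\,t}$. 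Your other suggested repair, forcing $(|C_R|,|C_B|)$ to be balanced, also fails as stated: Builder cannot dictate the colour he follows without paying in reservoir size, since if he commits to following red, Painter colours everything blue and the reservoir collapses. So the ``delicate technical heart'' you flag is genuinely missing, not merely deferred.

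The paper's resolution is a specific threshold strategy that you would need to reproduce in some form. Builder follows the colour that is currently ahead in the string only when that neighbourhood has size at least $(1-\alpha)(|V_i|-1)$ with $\alpha = 0.01$, and otherwise follows the other colour (which then has size at least $\alpha(|V_i|-1)$); the process stops when one colour has been followed $0.99t$ times or both colours $0.01t$ times. This makes Painter's unbalancing self-defeating: steps in the leading colour cost only a factor $(1-\alpha)$ each, and if the imbalance ever reaches $0.99t$ the seed clique is so large that Builder only needs $r(0.01t, t) \leq \binom{1.01t}{0.01t} \leq 1.06^t \leq 1.25^{-t} r(t)$ inside the reservoir --- an off-diagonal number that is exponentially small compared with $r(t)$ unconditionally, because the two parameters are wildly asymmetric. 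The expensive (minority-colour) steps, costing a factor of order $\alpha$, can occur at most $0.01t$ times before the balanced stopping condition triggers, in which case the needed clique size is $r(0.99t)$, and only here does the averaging subsequence enter. The total reservoir loss is then $(2/\alpha)^{0.01t}(1-\alpha)^{-t} \approx 1.066^t \ll r(t)$, so the edge count $mn + \binom{p}{2}$ comes out below $1.001^{-t}\binom{r(t)}{2}$. Without this (or an equivalent) mechanism converting Painter's imbalance into either cheap steps or an extreme asymmetry in the residual Ramsey requirement, your outline does not yield the theorem.
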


\begin{proof}
Let $\alpha = 0.01$. Consider the following strategy. To begin, Builder draws $n-1$ edges emanating from a single vertex $v_1$. Painter must paint at least $(n-1)/2$ of these the same colour. Let $V_1$ be the neighborhood of $v_1$ in that colour. We also define a string $s$ in terms of the colours chosen. We initialise this string by writing $s = R$ if the majority colour was red and $s=B$ if it were blue.

Suppose now that we are looking at a set $V_i$. We choose any given vertex $v_{i+1}$ and draw all the neighbours of $v_{i+1}$ within $V_i$. If, in the string $s$, there are more $R$s than $B$s, we choose $V_{i+1}$ to be the neighborhood of $v_{i+1}$ in red if $|V_{i+1}| \geq (1 - \alpha) (|V_i| - 1)$ and the neighborhood in blue otherwise. Similarly, if there are more blues than reds, one chooses $V_{i+1}$ to be the neighborhood of $v_{i+1}$ in blue if and only if $|V_{i+1}| \geq (1 - \alpha) (|V_i| - 1)$. If the number of reds and blues are the same, then we follow whichever has more neighbours. The string $s$ then has whichever colour we followed appended to it. If, for example, our string were $a_1 \cdots a_i$, with each $a_j=R$ or $B$, and we followed red, the new string would be $a_1 \cdots a_i R$.

Let $\mu = 0.99$ and $\nu = 0.01$. The process stops when the string $s$ contains either $\mu t$ $R$s, $\mu t$ $B$s or $\nu t$ $R$s and $\nu t$ $B$s. Suppose, at that stage, that we have chosen $m$ vertices and $m$ neighborhoods. Builder's strategy now is to fill in a complete subgraph of $V_m$ of size $p$ equal to the maximum of  $r((1-\mu)t, t)$ and $r((1-\nu)t)$. It is easy to see that Painter will be forced to draw a complete graph in one colour or the other. Suppose, for example, that our string contains $\mu t$ blues. Since $p \geq r((1-\mu)t, t)$, $V_m$ contains either a red clique of size $t$, in which case we're done, or a blue clique of size $(1-\mu) t$. This latter clique may be appended to the $\mu t$ vertices which correspond to $B$ in the string to form a blue clique of size $t$. The other cases follow similarly. 

We need now to estimate the number of edges that Builder has to draw. In order to guarantee that the process works, we need to start with an $n$ which will guarantee that $|V_m| \geq p$. If we made the most expensive choice at each point as we were choosing the $v_i$, we may have to choose $n$ to be as large as $(2/\alpha)^{\nu t} (1 - \alpha)^{- \mu t} p$. Since $m \leq (\mu + \nu) t$, it is then elementary to see that the number of edges Builder draws is at most
\[m n + \binom{p}{2} \leq t (2/\alpha)^{\nu t} (1 - \alpha)^{-t} p + \binom{p}{2}.\]

To estimate the value of this expression, we must first understand something about $p$. By the choice of $\mu$,
\begin{eqnarray*}
r((1-\mu)t, t) = r(0.01 t, t) & \leq & \binom{1.01 t}{t}
\leq \left(\frac{1.01 e t}{0.01 t}\right)^{0.01 t} \\
& \leq &  1.06^t \leq 1.25^{-t} r(t),
\end{eqnarray*}
since $r(t) \geq \sqrt{2}^t$.  

On the other hand, there must be infinitely many values $t$ for which
\[r((1-\nu) t) = r(0.99t) \leq 1.001^{-t} r(t).\]
Suppose otherwise. Then there exists $t_0$ such that, for all $t \geq t_0$,
\[\frac{r(t)}{r(0.99t)} \leq 1.001^t.\]
By telescoping, this would imply that
\begin{eqnarray*}
r(0.99^{-A} t_0) & \leq & (1.001)^{(0.99^{-1} + \cdots + 0.99^{-A}) t_0} r(t_0) \\
& \leq & (1.001)^{100 (0.99)^{-A} t_0} r(t_0).
\end{eqnarray*}
If we rewrite this equation, with $t = 0.99^{-A} t_0$ and $C = r(t_0)$, we see that this would imply
\[r(t) \leq C (1.001)^{100 t} \leq C (1.106)^t,\]
which is plainly a contradiction for large $t$.

We may therefore conclude that $p \leq 1.001^{-t} r(t)$ infinitely often. At such values of $t$ we have that the number of edges Builder must draw to force Painter to draw a monochromatic $K_t$ is less than
\begin{eqnarray*}
t (2/\alpha)^{\nu t} (1 - \alpha)^{-t} p + \binom{p}{2} & \leq & t (200)^{0.01t} (0.99)^{-t} p + \binom{p}{2}\\
& \leq & t (1.066)^t p + \binom{p}{2} \leq \frac{r(t) - 1}{4} p + \binom{p}{2} \\
& \leq & 1.001^{-t} \binom{r(t)}{2},
\end{eqnarray*}
provided that $t$ is sufficiently large.
\end{proof}

Let $\tilde{r}_q (t)$ be the $q$-colour on-line Ramsey number of $K_t$. That is, Builder and Painter play the usual game, but Painter is now allowed to use $q$ colours. It is worth noting that, for small $q$, the same method can be used to give some results on $\tilde{r}_q (t)$, but it does not allow one to show unreservedly that $\tilde{r}_q (t) \leq c^{-t} \binom{r_q(t)}{2}$, where $r_q (t)$ is the ordinary $q$-colour Ramsey number of $K_t$. The following theorem serves as an illustration, though we stress that the condition on $r_3(t)$ is not best possible.

\begin{theorem}
There exists a constant $c$ such that, for infinitely many $t$, if $r_3(t) \geq 4^t$ then
\[\tilde{r}_3 (t) \leq c^{-t} \binom{r_3(t)}{2}.\] 
\end{theorem}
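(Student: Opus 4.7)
The plan is to carry out a three-colour analogue of the proof of Theorem~\ref{Main}. Builder generates a sequence of vertices $v_1, v_2, \ldots$ together with a nested chain $V_1 \supseteq V_2 \supseteq \cdots$ of common neighbourhoods and a string $s$ over the alphabet $\{R, B, G\}$ that records the colour chosen at each step. Parameters $\alpha$ small, $\mu$ close to $1$, and $\nu$ small are fixed at the outset.

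At stage $i$, Builder exposes $v_{i+1}$'s edges into $V_i$ and sets $c^*$ to be the most common letter in $s$ (breaking ties arbitrarily). If the $c^*$-neighbourhood of $v_{i+1}$ has size at least $(1-\alpha)(|V_i|-1)$, Builder follows $c^*$ (a cheap step with shrink factor at most $1/(1-\alpha)$); otherwise, Builder follows whichever of the two other colours has the larger neighbourhood (an expensive step with shrink factor at most $2/\alpha$, since the two non-$c^*$ colours together cover more than $\alpha(|V_i|-1)$ edges from $v_{i+1}$). The process halts when either (A)~some colour reaches count $\mu t$ in $s$, or (B)~all three colours each reach count $\nu t$ in $s$. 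Builder then draws a clique of size $p$ on $V_m$, taking $p = r_3((1-\mu)t, t, t)$ in case (A) and $p = r_3((1-\nu)t)$ in case (B); in either case Painter is forced to produce a monochromatic clique in this $K_p$ that combines with the stored vertices in $s$ of the matching colour to yield a monochromatic $K_t$, completing the game.

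A walk-type counting argument parallel to the one implicit in the proof of Theorem~\ref{Main} bounds the number of expensive steps by $O(\nu t)$, while the total number of steps is at most $\mu t + O(\nu t)$. This yields the estimate $n \leq (1-\alpha)^{-t}(2/\alpha)^{O(\nu t)} p$ for the required initial neighbourhood size, so the total number of edges Builder draws is at most $tn + \binom{p}{2}$.

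The main obstacle is controlling $p$. For case (B), the telescoping argument of Theorem~\ref{Main} transfers directly, giving $r_3((1-\nu)t) \leq c_0^{-t} r_3(t)$ for some $c_0 > 1$ and infinitely many $t$. For case (A), a standard multinomial estimate yields $r_3((1-\mu)t, t, t) \leq C(\mu)^t$, where $C(\mu) \to 4$ as $\mu \to 1$; the hypothesis $r_3(t) \geq 4^t$ is then precisely what is needed, after taking $\mu$ sufficiently close to $1$ and exploiting a little slack in this bound, to make $r_3((1-\mu)t, t, t)$ smaller than $r_3(t)$ by a factor $c_1^{-t}$ for some $c_1 > 1$. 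This is the sense in which the paper observes that the condition on $r_3(t)$ is not best possible. Assembling the pieces produces $\tilde{r}_3(t) \leq c^{-t}\binom{r_3(t)}{2}$ for a suitable constant $c > 1$ and infinitely many $t$.
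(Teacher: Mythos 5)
Your proposal is the natural adaptation of Theorem \ref{Main}, but two of its key claims do not survive scrutiny. First, the cost analysis fails for your stopping rule. In the two-colour proof the number of expensive steps is at most $\nu t$ because every expensive step appends the strict minority colour, and the stopping rule ``$\nu t$ $R$s and $\nu t$ $B$s'' caps the minority count at $\nu t$. With your halting condition (B), which requires \emph{all three} counts to reach $\nu t$, this mechanism breaks: Painter can colour each neighbourhood so that the current majority colour $c^*$ gets just under a $(1-\alpha)$-fraction and the remainder goes entirely to one fixed second colour, never to the third. Then the third letter never enters $s$, neither stopping condition triggers until one of the first two counts reaches $\mu t$, and essentially every one of the roughly $2\mu t$ steps is expensive. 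The required initial size is then of order $(2/\alpha)^{2\mu t}p$ rather than $(1-\alpha)^{-t}(2/\alpha)^{O(\nu t)}p$, and $mn$ alone dwarfs $c^{-t}\binom{r_3(t)}{2}$. The correct analogue of the paper's trick is to halt as soon as the \emph{second-largest} count reaches $\nu t$ (so that each expensive step increments a counter that is still below $\nu t$, giving at most $3\nu t$ expensive steps), at the price of needing $r_3((1-\nu)t,(1-\nu)t,t)$ rather than $r_3((1-\nu)t)$ in that case --- a change you would then have to absorb elsewhere in the argument.

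Second, your treatment of case (A) misuses the hypothesis. The multinomial bound gives $r_3((1-\mu)t,t,t)\leq\binom{(2+\delta)t}{\delta t}\binom{2t}{t}$ with $\delta=1-\mu$, and since $\binom{2t}{t}=\Theta(4^t/\sqrt{t})$, your $C(\mu)$ tends to $4$ from \emph{above} as $\mu\to 1$, never below it; moreover $r_3((1-\mu)t,t,t)\geq r(t,t)$, and no provable upper bound available here puts $r(t,t)$ below $4^t$ by an exponential factor. Hence ``$r_3(t)\geq 4^t$ plus a little slack'' cannot yield $r_3((1-\mu)t,t,t)\leq c_1^{-t}r_3(t)$, and the term $\binom{p}{2}$ in your final count is then of order $16^t$ up to factors $(1+o(1))^t$, which is not $c^{-t}\binom{r_3(t)}{2}$ when $r_3(t)$ is close to $4^t$. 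The hypothesis has to enter differently --- for instance, by completing inside $V_m$ with a cheaper on-line (neighbourhood-chasing) sub-strategy whose cost is roughly $t\cdot 3^{(2+\delta)t}$ edges, which \emph{is} exponentially smaller than $\binom{r_3(t)}{2}\geq\frac{1}{2}(16^t-4^t)$ once $r_3(t)\geq 4^t$ --- rather than by filling in a complete graph on $r_3((1-\mu)t,t,t)$ vertices. As written, both the step-counting estimate and the case (A) comparison are genuine gaps, so the assembly in your final paragraph does not go through.
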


Having looked at the ratio of the on-line Ramsey number to the size Ramsey number, we now turn our attention to proving a more specific bound on $\tilde{r}(t)$. If one looks at the standard proof of Ramsey's theorem, it is easy to prove a bound of the form $\tilde{r}(t) \leq 4^t$. By being more careful with this approach Pra\l at proved \cite{P308} that $\frac{3}{8\sqrt{\pi}} \frac{4^t}{\sqrt{t}}$. Applying the following recent improvement, due to the author \cite{C09}, on the upper bound for classical Ramsey numbers, we can improve Pra\l at's bound much further. 

\begin{theorem} \label{DiagonalRamsey}
There exists a constant $C$ such that, for $\frac{1}{2} l \leq k \leq 2l$,
\[r(k,l) \leq k^{-C \frac{\log k}{\log \log k}} \binom{k+l}{k}.\] 
\end{theorem}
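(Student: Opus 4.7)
My plan is to improve the Erd\H{o}s--Szekeres recursion $r(k,l) \leq r(k-1,l) + r(k,l-1) \leq \binom{k+l}{k}$ by squeezing a small savings out of each of many recursion steps, using a \emph{book} absorption argument. Recall the usual greedy setup: maintain a red clique $S$ and a candidate set $T$ of vertices red-joined to all of $S$, pick $v \in T$, and replace $T$ by either its red or its blue neighbourhood in $T$ (extending $S$, or earmarking $v$ toward the blue clique, accordingly). The classical proof has no slack because we settle for majority neighbourhoods at every step, losing a factor $\binom{k+l}{k}/\binom{k+l-1}{k-1}$ no matter which way the colouring goes.

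The improvement would come from an inductive dichotomy with a small parameter $\epsilon$, roughly $1/\log k$. Either (i) there is some $v \in T$ whose red-degree in $T$ exceeds the ``expected'' value $\tfrac{k}{k+l}|T|$ by a factor $1+\epsilon$, in which case passing to that enlarged neighbourhood beats the trivial recursion by $(1+\epsilon)$; or (ii) essentially every vertex in $T$ has red and blue degrees in $T$ very close to their expected values, i.e.\ the colouring on $T$ is quasi-random. In the quasi-random case I would run a counting/convexity argument on common neighbourhoods to produce a large red book: a set $S' \subseteq T$ of size $m$ forming a red clique, together with $t \gtrsim (k/(k+l))^m |T|$ additional vertices red-joined to every element of $S'$. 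A book of this quality lets us absorb $m$ greedy steps in one pass, again saving a $(1+\epsilon)^m$-type factor over the classical recursion.

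Iterating, each step contributes a savings of $(1+\epsilon)$, and over the $\Theta(k)$ steps of the induction we should accumulate a total factor of $(1+\epsilon)^{\Theta(k)} = \exp(\Theta(k/\log k)) = k^{\Omega(\log k / \log \log k)}$, which is exactly the stated saving once $\epsilon$ is tuned to balance the two regimes. The main obstacle, and where I expect the real technical work to live, is making the book step quantitatively rigorous: one must show that a sufficiently strong quasi-random hypothesis on $T$ actually forces a book whose spine $m$ and page count $t$ are balanced so that absorbing the spine costs only a factor $\binom{k+l-m-1}{k-m-1}/\binom{k+l}{k}$ up to the allowed slack, and one must track how the quasi-random defect degrades as the induction progresses, so that case (ii) can be re-entered repeatedly. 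Getting this bookkeeping right, and in particular choosing $\epsilon$ and $m$ to equalise the savings in the two branches of the dichotomy, is the step from which the $\log k / \log \log k$ improvement over $4^t$ ultimately emerges.
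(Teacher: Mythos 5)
First, a point of orientation: Theorem \ref{DiagonalRamsey} is not proved in this paper at all. It is imported as a black box from \cite{C09} (the author's separate paper on the diagonal Ramsey problem) and then applied to deduce Theorem \ref{Specifics}. So what you attempted to reconstruct is a substantial external result, and there is no in-paper proof to compare your sketch against; I can only judge the sketch on its own merits.

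On those merits, what you have written is a plan rather than a proof, and the plan defers exactly the step in which the theorem's entire content resides. Two concrete problems. First, in your quasi-random case (ii) the book you propose to extract has spine $m$ and page set of size about $\left(\frac{k}{k+l}\right)^m |T|$; but this is precisely the rate at which the trivial proportional recursion shrinks the candidate set over $m$ red steps, so absorbing the spine at that rate recovers at best the bound $\binom{k+l}{k}$ and yields no saving whatsoever. You assert a ``$(1+\epsilon)^m$-type'' gain from the absorption, but no mechanism for it is given: the gain would have to come either from pages substantially larger than $\left(\frac{k}{k+l}\right)^m|T|$ or from acquiring the spine more cheaply than $m$ separate steps, and proving such a statement from a quasi-randomness hypothesis (with the right quantitative dependence, and stable under iteration) is the hard lemma you explicitly leave open. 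Second, your accounting does not match the statement: a gain of $(1+\epsilon)$ with $\epsilon \approx 1/\log k$ at $\Theta(k)$ steps would give a saving of $e^{\Theta(k/\log k)}$, which is vastly larger than the claimed $k^{C\log k/\log\log k} = e^{C\log^2 k/\log\log k}$ --- it would amount to an exponential improvement on $4^t$, far beyond what the theorem asserts or what was known when it was proved. That your bookkeeping ``delivers'' much more than the target is a sign it is not confronting the true obstruction, namely that the increment/quasi-random dichotomy cannot be sustained with constant-quality gains at linearly many steps; the genuine argument has to extract a much more modest total gain, and tuning where and how often the dichotomy can be exploited is the substance of \cite{C09}. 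As it stands, then, there is a real gap: the key quantitative lemma converting quasi-randomness into a saving is missing, and everything rests on it.
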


Note that the numbers $\frac{1}{2}$ and $2$ are not important and may be replaced by any other fixed positive constants.

\begin{theorem} \label{Specifics}
There exists a constant $c$ such that
\[\tilde{r}(t) \leq t^{-c \frac{\log t}{\log \log t}} 4^t.\]
\end{theorem}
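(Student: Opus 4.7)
The plan is to reuse the tree-of-pivots strategy from the proof of Theorem \ref{Main}, but with parameters tuned so that the endgame invokes Theorem \ref{DiagonalRamsey} instead of a crude Erd\H{o}s--Szekeres bound. Starting from a workspace $V_0$ of size $n$, Builder proceeds in rounds. In round $i$, Builder selects a pivot $v_i \in V_{i-1}$, draws all edges from $v_i$ to $V_{i-1} \setminus \{v_i\}$, sets $V_i$ to be the majority-colour neighbourhood of $v_i$, and appends the corresponding colour ($R$ or $B$) to a record string $s$. After $m$ rounds one has $|V_m| \geq n/2^m$, and, exactly as in the analysis of Theorem \ref{Main}, the $k$ pivots labelled $R$ in $s$ form a red clique completely joined in red to $V_m$, while the $l = m - k$ pivots labelled $B$ form a blue clique joined in blue.

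Builder then fills in the complete graph on a subset of $V_m$ of size $r(t-k, t-l)$. Painter's response must contain a red $K_{t-k}$ or a blue $K_{t-l}$, which extends via the committed clique to a monochromatic $K_t$. Since $r(a,b)$ with $a+b$ fixed is maximised at $a = b$, it suffices to prepare $n = 2^m \, r(t - m/2, t - m/2)$ to cover every possible record string.

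The improvement now comes from bounding $r(t-m/2, t-m/2)$ by Theorem \ref{DiagonalRamsey} rather than by the usual $\binom{2t-m}{t-m/2}$. Setting $m = t$ gives $r(t/2, t/2) \lesssim t^{-C \log t/\log\log t}\, \binom{t}{t/2} \lesssim t^{-C \log t/\log\log t}\, 2^t/\sqrt{t}$, so $n \lesssim t^{-C \log t/\log\log t}\, 4^t/\sqrt{t}$. The tree phase uses at most $m n \lesssim \sqrt{t}\cdot t^{-C \log t/\log\log t}\, 4^t$ edges, and the endgame uses at most $\binom{r(t/2,t/2)}{2} \lesssim t^{-2C \log t/\log\log t}\, 4^t / t$ edges. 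Both are bounded by $t^{-c \log t/\log\log t}\, 4^t$ for any $c < C$, which gives the theorem.

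The one delicate point is the choice of $m$. If $m$ is noticeably smaller than $t$, the endgame cost $\binom{r(t-m/2, t-m/2)}{2} \sim 4^{2t-m}$ exceeds $4^t$; if $m$ is larger than $t$ by much, the factor $2^m$ in $n$ inflates the tree cost. Taking $m = t$ balances the two contributions, and the polynomial slack of $\sqrt{t}$ remaining in the tree bound is comfortably absorbed by the $t^{C \log t/\log\log t}$ savings delivered by Theorem \ref{DiagonalRamsey}. No new combinatorial ingredient beyond those already used in the proof of Theorem \ref{Main} is needed.
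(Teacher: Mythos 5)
Your strategy is essentially the paper's: run the greedy pivot process for linearly many steps, then fill in a clique inside the surviving set whose size is the worst relevant off-diagonal Ramsey number, and extract the savings by bounding that Ramsey number via Theorem \ref{DiagonalRamsey}. Your choice $m = t$ rather than the paper's $m = \frac{3}{2}t$ is immaterial: once the factor $t^{-C\log t/\log\log t}$ is available, either choice balances the tree cost $mn$ against the endgame cost $\binom{p}{2}$, exactly as you observe.

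There is, however, one genuine gap: the step ``Since $r(a,b)$ with $a+b$ fixed is maximised at $a=b$, it suffices to prepare $n = 2^m\, r(t-m/2, t-m/2)$.'' That maximisation property of Ramsey numbers is not a known theorem. Monotonicity only gives $r(a,b) \leq r(a',b')$ when $a \leq a'$ \emph{and} $b \leq b'$, and moving a pair $(t-k, t-l)$ towards the diagonal raises one argument while lowering the other; it is the binomial upper bound, not $r$ itself, that is known to peak at the centre. Moreover, Theorem \ref{DiagonalRamsey} is only stated for $\frac{1}{2}l \leq k \leq 2l$, so even granting a reduction to ``the worst split'' you must still say something about unbalanced splits. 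The repair is exactly the case analysis in the paper: set $p = \max_{k+l=m} r(t-k, t-l)$ and bound each term separately. If $t-k$ and $t-l$ are within a factor $2$ of each other, then (with $m=t$) both are at least $t/3$, and Theorem \ref{DiagonalRamsey} gives $r(t-k,t-l) \leq t^{-c'\frac{\log t}{\log\log t}} \binom{2t-m}{t-k} \leq t^{-c'\frac{\log t}{\log\log t}} 2^{2t-m}$. If instead, say, $t-k \leq \frac{1}{2}(t-l)$, use only the Erd\H{o}s--Szekeres bound $r(t-k,t-l) \leq \binom{2t-m}{t-k}$ and verify, as the paper does with an explicit product estimate, that such an off-centre binomial coefficient is smaller than $2^{2t-m}$ by an exponential factor, hence certainly by $t^{-c'\frac{\log t}{\log\log t}}$. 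With that case analysis inserted in place of the unproved maximisation claim, your count $mn + \binom{p}{2} \leq t^{-c\frac{\log t}{\log\log t}}4^t$ goes through.
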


\begin{proof}
We follow the usual strategy. To begin, choose a vertex $v_1$ and draw $n-1$ edges emanating from it. This vertex must have at least $(n-1)/2$ neighbours in (at least) one of red or blue. We fix such a colour, calling it $C_1$. Let $V_1$ be the vertex set consisting of the neighbours of $v_1$ in $C_1$. Now, given $V_i$, choose any element $v_{i+1}$ of $V_i$ and draw all the edges connecting $v_{i+1}$ to other vertices in $V_i$. $v_{i+1}$ must have at least $(|V_i|-1)/2$ neighbours in colour $C_i$, say. We let $V_{i+1}$ be the set of neighbours of $v_{i+1}$ in $C_i$.

We stop the process after $m = \frac{3}{2}t$ steps. Let $p$ be the maximum value of $r(t-a, t-b)$ taken over all $a$ and $b$ with $a + b = \frac{3}{2}t$. We claim that if $|V_m| \geq p$ then Builder has a winning strategy. To see this, suppose that $a$ of the $C_i$ are red and $b$ of them are blue. Since $|V_m| \geq r(t-a, t-b)$, Builder may draw a complete subgraph in $V_m$ of size $r(t-a, t-b)$. Once Painter has coloured this, $V_m$ must contain either a red $K_{t-a}$ or a blue $K_{t-b}$. Connecting these back to either the $a$ vertices whose associated colour is red or the $b$ vertices whose associated colour is blue would imply a monochromatic $K_t$. Choosing $n = 2^{3t/2} p$, $V_m$ will necessarily have the required size $|V_m| \geq p$.

How many edges does Builder draw in this process? For each vertex $v_i$, $1 \leq i \leq m$, he draws at most $n$ edges, and within $V_m$ he draws at most $\binom{p}{2}$ edges. Therefore,
\begin{eqnarray*}
\tilde{r}(t) & \leq & \sum_{i=1}^m n + \binom{p}{2}\\
& \leq & m n + p^2\\
& \leq & t 2^{3t/2 + 1} p + p^2.
\end{eqnarray*}

We now need a bound for $p$. For the second part of this sum it is sufficient to note that, for all $a$ and $b$ with $a + b = \frac{3}{2} t$,
\[r(t-a,t-b) \leq 2^{(t-a) + (t-b)} \leq 2^{2t - (a + b)} \leq 2^{t/2}.\]
For the first part we must use Theorem \ref{DiagonalRamsey}. When $\frac{1}{2} (t-b) \leq t - a \leq 2(t-b)$, we must have that $t - a \geq t/8$. Otherwise, we would have $t - b \leq t/4$ and therefore $a + b \geq 13t/8$. Therefore, by Theorem \ref{DiagonalRamsey}, if $\frac{1}{2}(t - b) \leq t - a \leq 2(t-b)$, there exists $c$ such that 
\begin{eqnarray*}
r(t-a,t-b) & \leq & t^{-c \frac{\log t}{\log \log t}} \binom{2t - (a+b)}{t-a}\\
& \leq & t^{-c \frac{\log t}{\log \log t}} 2^{2t - (a+b)} = t^{-c \frac{\log t}{\log \log t}} 2^{t/2}.
\end{eqnarray*}
If, on the other hand, $t - a \leq \frac{1}{2}(t - b)$ (or, by symmetry, $t- b \leq \frac{1}{2} (t-a)$), then
\begin{eqnarray*}
r(t-a, t-b) & \leq & \binom{2t - (a+b)}{t-a}
= \prod_{i=1}^{t-b} \left(1 + \frac{t-a}{t-b-i+1}\right)\\
& = & \binom{(t - a) + 3(t-b)/4}{t-a} \prod_{i=1}^{(t-b)/4} \left(1 + \frac{t-a}{t-b-i+1}\right)\\
& \leq & 2^{(t-a) + 3(t-b)/4} \prod_{i=1}^{(t-b)/4} \left(1 + \frac{t-a}{3(t-b)/4} \right)\\
& \leq & 2^{(t-a) + 3(t-b)/4} (5/3)^{(t-b)/4}\\
& \leq & t^{-c \frac{\log t}{\log \log t}} 2^{2t - (a+b)} = t^{-c \frac{\log t}{\log \log t}} 2^{t/2}.
\end{eqnarray*}
It therefore follows that
\begin{eqnarray*}
\tilde{r}(t) & \leq & t 2^{3t/2 + 1} p + p^2\\
& \leq & 2 t^{-c \frac{\log t}{\log \log t} + 1} 4^t + 2^t. 
\end{eqnarray*}
The result follows from a slight adjustment of the constant $c$.    
\end{proof}

For $\tilde{r}_q (t)$, an obvious upper bound is $q^{qt}$, though this can be improved to $c_q t^{-(q-1)/2} q^{qt}$. As things stand, this agrees, up to the constant, with the best known bound for $r_q(t)$. However, using the method above, one can prove that any improvement on the bound for $r_q(t)$ implies a corresponding improvement for $\tilde{r}_q (t)$. 

\begin{theorem} \label{qSpec}
Let $q$ be an integer. Suppose that $f: \mathbb{N} \rightarrow \mathbb{R}$ is a function such that if $k_1 \geq k_2 \geq \cdots \geq k_q \geq \frac{1}{2} k_1$, then 
\[r(k_1, k_2, \cdots, k_q) \leq f(k_1) q^{k_1 + k_2 + \cdots + k_q}.\]
Then there exists a constant $c > 0$ such that
\[\tilde{r}_q (t) \leq f(ct) q^{qt}.\]
\end{theorem}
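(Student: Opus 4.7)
My plan is to mimic Theorem~\ref{Specifics}, taking $m = qt/2$ rounds. Builder picks $v_i \in V_{i-1}$, draws the $|V_{i-1}|-1$ pending edges from $v_i$, and lets $V_i$ be the $C_i$-neighborhood of $v_i$ in $V_{i-1}$, where $C_i$ is a colour used by Painter at least $(|V_{i-1}|-1)/q$ times (guaranteed by pigeonhole). Writing $a_j := |\{i \in [m] : C_i = j\}|$, the edge $v_i v_j$ (for $i < j$) has colour $C_i$, so the $a_j$ tag-$j$ vertices induce a monochromatic $K_{a_j}$ in colour $j$. If some $a_j \geq t$ Builder is done, so we may assume $a_j \leq t - 1$ for all $j$; then $k_j := t - a_j \geq 1$ with $\sum k_j = qt/2$, and Builder finishes by drawing a $K_p$ inside $V_m$ with $p := r(k_1, \ldots, k_q)$, since any monochromatic $K_{k_j}$ in colour $j$ arising there combines with the $a_j$ tag-$j$ vertices to give the desired monochromatic $K_t$.

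The crux is a uniform bound on $p$. Sort $k_1 \geq \cdots \geq k_q$. In the balanced regime $k_q \geq k_1/2$, the constraint $\sum k_j = qt/2$ together with the balance condition gives $k_1 \in [t/2,\, qt/(q+1)] \subset [t/2, t]$, so the hypothesis yields $r(k_1, \ldots, k_q) \leq f(k_1) q^{qt/2} \leq f(t/2) q^{qt/2}$ (replacing $f$ by its nonincreasing envelope if necessary, which is harmless since the hypothesis is monotone in this way for all $f$ of interest). The main obstacle is the unbalanced regime $k_q < k_1/2$, where the hypothesis does not apply directly. Here I would bound $r$ by the Erd\H{o}s--Szekeres multinomial $r \leq \binom{qt/2}{k_1, \ldots, k_q}$ together with a short entropy estimate: a case split on whether $k_1 \geq 2N/q$ or $k_1 < 2N/q$ (where $N = qt/2$) shows that $k_q < k_1/2$ forces $|p_1 - 1/q| + |p_q - 1/q| = \Omega(1/q)$ for $p_j = k_j/N$, so the total variation distance of $(p_j)$ from uniform is $\Omega(1/q)$; Pinsker's inequality then gives a $q$-ary entropy drop of $\Omega(1/q^2)$, hence $\binom{N}{k_1, \ldots, k_q} \leq q^{qt/2}\,e^{-\Omega(t/q)}$. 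For any $f$ not decaying super-exponentially (in particular any $f$ arising from a bound like Theorem~\ref{DiagonalRamsey}), this is dwarfed by $f(t/2)$, so uniformly $p \leq f(t/2)\,q^{qt/2}$.

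Given this bound, the edge count is routine. Choose $n = q^m p \leq f(t/2)\,q^{qt}$ initially; then $|V_m| \geq p$. The Phase 1 shrinking rounds contribute $\sum_{i=0}^{m-1} |V_i| \leq \sum_{i \geq 0} n/q^i \leq 2n$ edges, and the Phase 2 Ramsey completion inside $V_m$ contributes $\binom{p}{2} \leq f(t/2)^2 q^{qt}$ edges, for a total of at most $3\,f(t/2)\,q^{qt}$. Finally, choosing any constant $c \in (0, 1/2)$ small enough that $f(ct) \geq 3\,f(t/2)$ for all sufficiently large $t$ (possible because $f(ct)/f(t/2) \to \infty$ as $t \to \infty$ for any nontrivially decaying $f$, e.g.\ $f(t) = t^{-C \log t / \log \log t}$) yields $\tilde{r}_q(t) \leq f(ct)\,q^{qt}$.
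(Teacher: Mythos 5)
Your plan is essentially the proof the paper intends: run the $q$-colour analogue of the argument for Theorem \ref{Specifics}, and handle the residual Ramsey number $r(k_1,\ldots,k_q)$ by the hypothesis on $f$ when the tuple is balanced and by the multinomial Erd\H{o}s--Szekeres bound when it is not --- the paper itself says this multinomial bound is the ``one extra component'' needed. Your Pinsker/entropy estimate is a clean (and more uniform) substitute for the paper's hand computation in the $q=2$ case, where the exponential saving appears as the factor $(5/6)^{(t-b)/4}$; and the caveats you flag --- passing to a nonincreasing envelope of $f$ in the balanced case, and needing $f$ not to decay faster than the exponential saving in the unbalanced case --- are equally implicit in the paper's treatment, which has the quasipolynomial $f(x)=x^{-C\log x/\log\log x}$ in mind, so they are not a departure from the intended proof.

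One step does fail as written, though: the Phase 1 edge count. You bound $\sum_{i=0}^{m-1}|V_i|$ by the geometric series $\sum_{i\ge 0} n/q^i \le 2n$, but the pigeonhole inequality $|V_i|\ge(|V_{i-1}|-1)/q$ is only a \emph{lower} bound; Painter, not Builder, controls how fast the sets shrink. For instance, a Painter who colours every edge of round $i$ with colour $(i \bmod q)+1$ keeps $|V_i|=|V_{i-1}|-1$ while no tag count reaches $t$ before round $qt/2$, so Phase 1 then costs on the order of $mn$ edges, not $2n$. The repair is immediate and stays within your argument: either have Builder discard vertices so that $|V_i|$ is exactly the guaranteed size $(|V_{i-1}|-1)/q$ (after which the geometric bound is legitimate), or simply use the crude bound $mn\le (qt/2)\,n$, as the paper does, and absorb the extra factor $qt/2$ into the constant $c$ in the same way the paper ends the proof of Theorem \ref{Specifics} with ``a slight adjustment of the constant'' (your final absorption step, which already trades $f(t/2)$ for $f(ct)$, handles a polynomial factor just as easily for the functions $f$ at which the theorem is aimed). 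With that fix, and with $n$ fixed in advance using the maximum of $r(k_1,\ldots,k_q)$ over all admissible residual tuples (which your ``uniform bound on $p$'' effectively is), the argument goes through.
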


The one extra component needed in the proof is the following formula, which generalises the standard Erd\H{o}s-Szekeres bound for two-colour Ramsey numbers by using multinomial coefficients.
\[r(k_1, k_2, \cdots, k_q) \leq \binom{k_1 + k_2 + \cdots + k_q}{k_1, k_2, \cdots, k_q}.\]
Note that Theorem \ref{Specifics} is the special case of Theorem \ref{qSpec} where $q = 2$ and $f(x) = x^{-C \frac{\log x}{\log \log x}}$. 

\section{Complete bipartite graphs}

The method we will use in this section is closely related to that used by the author \cite{C08} in improving the bipartite Ramsey number of $K_{t,t}$ from $2^t t$ to, roughly, $2^t \log t$. It involves a two-step strategy for Builder. In the first step, using around $2^t$ edges, he forces Painter to draw a complete monochromatic, say blue, bipartite graph with $t - 2 \log t$ vertices on one side and $t^2$ on the other. In the second step Builder draws all the edges from this set $U$ of size $t^2$ to a new set of size $2^t \sqrt{t \log t}$. This forces Painter to draw either a red $K_{t,t}$, in which case we are done, or a complete bipartite graph in blue with a set $U' \subset U$ of $t$ vertices on one side and $2 \log t$ vertices on the other. In the latter case, we may add these $2 \log t$ vertices to the $t - 2 \log t$ already joined to $U$ to form a blue $K_{t,t}$. This then yields the result. 

In order to prove our results we will need to make repeated use of the following fundamental result about bipartite graphs, due essentially to K\H{o}v\'ari, S\'os and Tur\'an \cite{KST54}.

\begin{lemma} \label{KST}
Let $G$ be a bipartite graph with parts $A$ and $B$ and density at least $\epsilon$. Let $m = |A|$ and $n = |B|$. Then, provided that
\[m \geq 2 \epsilon^{-1} r^2 \mbox{ and } n \geq 2 \epsilon^{-r} s,\]
the graph contains a complete bipartite subgraph with $r$ vertices from $A$ and $s$ vertices from $B$. 
\end{lemma}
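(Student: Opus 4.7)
The plan is to run a standard double-counting argument in the spirit of the original K\H{o}v\'ari--S\'os--Tur\'an bound. Suppose for contradiction that $G$ contains no $K_{r,s}$ with $r$ vertices in $A$ and $s$ vertices in $B$. Count the pairs $(R,b)$ with $R \in \binom{A}{r}$, $b \in B$, and $R \subseteq N(b)$. By the contrary assumption, each fixed $R$ is contained in the neighbourhood of at most $s-1$ vertices of $B$, so the total is strictly less than $s\binom{m}{r}$. Summing first over $b$ gives $\sum_{b \in B} \binom{d(b)}{r}$, and since the density condition yields $\sum_{b \in B} d(b) \geq \epsilon m n$, convexity of $x \mapsto \binom{x}{r}$ together with Jensen's inequality gives $\sum_{b \in B} \binom{d(b)}{r} \geq n \binom{\epsilon m}{r}$ (noting that $\epsilon m \geq 2r$ under the hypothesis, so monotonicity in the lower argument is unambiguous).

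Combining these two estimates produces the key inequality
\[n < s \cdot \frac{\binom{m}{r}}{\binom{\epsilon m}{r}} = s \cdot \prod_{i=0}^{r-1} \frac{m-i}{\epsilon m - i}.\]
The remaining task is to show that this product is at most $2\epsilon^{-r}$ under the hypothesis $m \geq 2\epsilon^{-1} r^2$, which would contradict the assumed lower bound $n \geq 2\epsilon^{-r} s$.

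To control the product, rewrite each factor as
\[\frac{m-i}{\epsilon m - i} = \frac{1}{\epsilon}\cdot \frac{m-i}{m - i/\epsilon} \leq \frac{1}{\epsilon}\left(1 + \frac{i(1-\epsilon)/\epsilon}{m - i/\epsilon}\right).\]
The condition $m \geq 2\epsilon^{-1} r^2$ forces $m \geq 2r/\epsilon$, so $m - i/\epsilon \geq m/2$ for every $i < r$. Taking logarithms and using $\log(1+x)\leq x$ then yields a cumulative error of at most $r^2/(m\epsilon) \leq 1/2$, so the full product is bounded by $\epsilon^{-r} e^{1/2} < 2\epsilon^{-r}$, giving the required contradiction.

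The main obstacle is sharpening this constant: the naive estimate $(m-i)/(\epsilon m - i) \leq 2/\epsilon$ only gives $(2/\epsilon)^r$, which is far too weak. The hypothesis is quadratic in $r$ (rather than linear) precisely so that the cumulative loss from the "$-i$" terms in numerator and denominator is absorbed into a harmless $O(1)$ factor.
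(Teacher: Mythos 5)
Your proposal is correct and follows essentially the same route as the paper: the standard K\H{o}v\'ari--S\'os--Tur\'an double count of pairs $(R,b)$ with $R\subseteq N(b)$, Jensen's inequality to lower-bound $\sum_b \binom{d(b)}{r}$ by $n\binom{\epsilon m}{r}$, and then the hypothesis $m \geq 2\epsilon^{-1}r^2$ used to absorb the falling-factorial corrections into a constant factor (you bound the ratio $\binom{m}{r}/\binom{\epsilon m}{r}$ by $\epsilon^{-r}e^{1/2}$, while the paper equivalently bounds $\prod_{i=1}^{r-1}(1-i/\epsilon m)\geq e^{-r^2/\epsilon m}\geq 1/2$). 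The differences are only cosmetic, and your handling of the strict count via $s-1$ is if anything slightly cleaner.
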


\begin{proof}
Suppose otherwise. If there are no complete subgraphs having $r$ vertices in $A$ and $s$ in $B$, there can be at most $\binom{|A|}{r} s$ pairs $(A',v)$, where $A' \subset A$ has size $r$ and every element in $A'$ is joined to $v$. On the other hand, the number of such pairs is equal to the sum $\sum_{v \in B} \binom{d(v)}{r}$. By convexity,
\begin{eqnarray*}
\sum_{v \in B} \binom{d(v)}{r} & \geq & n \binom{\frac{1}{n} \sum_{v \in B} d(v)}{r}
\geq n \binom{\epsilon m}{r}\\
& = & n \frac{(\epsilon m)^r}{r!} \prod_{i=1}^{r-1} \left(1 - \frac{i}{\epsilon m} \right).  
\end{eqnarray*}
Now, since $\frac{r}{\epsilon m} \leq \frac{1}{2}$ and, for $x \leq \frac{1}{2}$, $e^{-2x} \leq 1 - x$,
\begin{eqnarray*}
\prod_{i=1}^{r-1} \left(1 - \frac{i}{\epsilon m} \right) & \geq & \prod_{i=1}^{r-1} e^{-2i/\epsilon m}\\
& \geq & e^{-r^2/\epsilon m} \geq \frac{1}{2}. 
\end{eqnarray*}
Therefore, comparing our two different ways of counting the number of pairs $(A', v)$, we find that
\[\frac{n}{2} \frac{(\epsilon m)^r}{r!} \leq \binom{m}{r} s \leq \frac{m^r}{r!} s.\]
This implies that
\[n \leq 2 \epsilon^{-r} s,\]
a contradiction.
\end{proof}

As one might expect, given a natural number $q$ and a graph $G$, $\tilde{r}_q(G)$ is the $q$-colour on-line Ramsey number of $G$. Since our result extends easily to the $q$-colour case, we give the proof in that level of generality. 
 
\begin{theorem}
\[\tilde{r}_q (K_{t,t}) \leq 48 q^{t+2} t^{3-1/q} \log^{1/q} t,\]
where $\log$ is taken to the base $q$.
\end{theorem}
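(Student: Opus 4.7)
My plan is to implement the two-step Builder strategy described at the opening of Section~3, adapted to $q$~colours. Introduce a deficit parameter $r$ of order $\log_q t$ and set $s = t-r$.

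In the first step, Builder forces Painter to draw a monochromatic complete bipartite graph $K_{s,M}$ in some colour $c^{*}$, where $M$ is chosen large enough to make the second step go through. Following the approach of~\cite{C08} (and the $q=2$ sketch at the start of the section), this is achieved adaptively: Builder picks vertices $v_1, v_2, \ldots$ one at a time and draws each $v_i$'s edges into a ``working set'' on the other side, which shrinks by a factor at most $q$ at each step, and a final colour pigeonhole isolates $s$ vertices sharing a common colour $c^{*}$. The total edge cost of this step is at most $q^{t}\cdot \mathrm{poly}(t)$, which turns out to be dominated by the second step.

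In the second step, Builder draws every edge between the large side $U$ of the step-1 structure and a fresh set $U'$ of size $M'$ (the dominant cost). Painter colours these $|U|\cdot M'$ edges. Fix a density threshold $\tau$. If some colour $c\neq c^{*}$ has density at least $\tau$ on $U\times U'$, apply Lemma~\ref{KST} with $A = U'$, $B = U$, $\epsilon = \tau$, and $r = s = t$ to obtain a monochromatic $K_{t,t}$ in colour $c$; this finishes the game. Otherwise every colour $c\neq c^{*}$ has density less than~$\tau$, so $c^{*}$ has density at least $1-(q-1)\tau$, and a second application of Lemma~\ref{KST} (with the short side of the desired $K_{t,r}$ placed on $U'$ and $\epsilon = 1-(q-1)\tau$) finds a monochromatic $K_{t,r}$ in $c^{*}$; its $r$ vertices in $U'$, combined with the $s$ step-1 vertices, form the desired $K_{t,t}$ in colour $c^{*}$.

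The hard part will be the joint optimisation of $\tau$, $r$, $M$, and $M'$ so that both branches of step~2 succeed and the total edge count matches the claimed bound. A crude balance (e.g.\ $\tau = 1/q$) yields only a $q^{t+1}t^{3}$-type bound; extracting the sharper factor $t^{-1/q}\log^{1/q}t$ requires $\tau$ to be taken just above $1/q$ (so the Case~A condition $M\geq 2\tau^{-t}t$ from Lemma~\ref{KST} barely holds with $M$ polynomial in $t$) and $r$ to be essentially $\log_q(t/2)$ (so the Case~B condition $(1-(q-1)\tau)^{r}\geq 2/t$ is tight). With these choices $M'$ works out to be of order $q^{t}t^{1-1/q}\log^{1/q}t$ and $|U|\cdot M'\leq 48\,q^{t+2}t^{3-1/q}\log^{1/q}t$. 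Step~1 is essentially routine given Lemma~\ref{KST}; the real substance is the Case~A/B balancing in step~2.
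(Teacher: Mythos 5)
Your second phase is essentially the paper's: an asymmetric density threshold just above $1/q$ on the edges between the polynomial-size common neighbourhood and a fresh set of size roughly $q^t t^{1-1/q}\log^{1/q}t$, two applications of Lemma \ref{KST} (one giving a $K_{t,t}$ in a minority colour, one giving a $K_{t,r}$ in $c^{*}$ with $r\approx 2\log_q t$ that is glued to the $s=t-r$ first-phase vertices), and the $t^{-1/q}\log^{1/q}t$ gain coming exactly from how far the threshold sits above $1/q$. The genuine gap is in your first phase. You propose to force the monochromatic $K_{s,M}$ adaptively, exposing vertices $v_1,v_2,\ldots$ one at a time against a working set that shrinks by a factor $q$ per step, with a final pigeonhole to extract $s$ vertices of a common colour. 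To pigeonhole $s=t-O(\log_q t)$ vertices of one colour you need about $q(s-1)+1\approx qt$ exposures, so the initial working set must have size about $q^{qs}M$, and the very first exposure already costs about $q^{qt}/\mathrm{poly}(t)$ edges (for $q=2$, about $4^t/t^2$), which dwarfs the claimed budget of $q^{t}\cdot\mathrm{poly}(t)$ and hence the theorem's bound. Painter simply splits each neighbourhood evenly and cycles the colours of the $v_i$, so no bookkeeping rescues this scheme. The paper avoids the problem by making phase one non-adaptive: Builder draws the entire complete bipartite graph between sets of sizes $6q^{t+1}\log t$ and $2qt^2$ (cost $12q^{t+2}t^2\log t$), observes that some colour has density at least $1/q$, and applies Lemma \ref{KST} once to extract, inside that colour, a complete bipartite graph with $t-2\log t$ vertices on one side and $3qt^2$ on the other. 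This one-shot K\H{o}v\'ari--S\'os--Tur\'an extraction, rather than vertex-by-vertex exposure, is the idea your proposal is missing.

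A secondary, fixable error: in your Case A you apply Lemma \ref{KST} with $A=U'$, $B=U$, which places the exponential condition $|B|\geq 2\epsilon^{-r}s$ on the polynomial-size set $U$; your later remark that ``$M\geq 2\tau^{-t}t$ barely holds with $M$ polynomial in $t$'' cannot be right, since with $\tau$ just above $1/q$ one has $\tau^{-t}=q^{t+o(t)}$. If $M$ really had to be of that order, the second-phase cost $|U|\cdot M'$ would be of order $q^{2t}$ and the bound would collapse. The roles must be swapped: the small side of the step-2 bipartite graph need only satisfy the polynomial condition $|U|\geq 2\tau^{-1}t^2$, and the condition of exponential size falls on the fresh set $U'$, which is exactly why $|U'|$ is taken of order $q^t t^{1-1/q}\log^{1/q}t$.
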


\begin{proof}
Builder begins by building a complete bipartite graph between sets $M$ and $N$, $M$ having size $6 q^{t+1} \log t$ and $N$ having size $2 q t^2$. Suppose, without loss of generality, that after Painter has coloured these edges the blue graph has density at least $1/q$. By Lemma \ref{KST} with $A = N$, $B = M$, $\epsilon = 1/q$, $r = t - 2 \log t$ and $s = 3 q t^2$, since
\[|M| \geq 6 q^{t+1} \log t = 2 q^{t - 2 \log t} (3 q t^2),\]
there must be a complete blue bipartite graph $H$ with the part in $M$ having size $3 q t^2$ and that in $N$ having size $t - 2 \log t$.

Builder now passes to the second phase, drawing a complete bipartite graph between the vertices $M'$ of $H$ that are in $M$ and a newly created vertex set $N'$ of size $12 q^t t^{1-1/q} \log^{1/q} t$. Let $\epsilon = \frac{(q-1) (\log t - \log\log t)}{q^2 t}$. Once Painter has coloured all the edges either the blue graph will have density at least $\frac{1}{q} - \epsilon$ or the graph in some other colour, red say, will have density at least $\frac{1}{q} + \frac{\epsilon}{q-1}$. Using the facts that $q \epsilon \leq \frac{1}{2}$ for all $t$ and, for $x \leq \frac{1}{2}$, $(1-x)^{-1} \leq 1 + x + 2x^2 \leq e^x + 2 x^2$, we note that
\begin{eqnarray*}
\left(\frac{1}{q} - \epsilon\right)^{-t} & \leq & q^t \left(1 - q \epsilon\right)^{-t}
\leq q^t \left(e^{q \epsilon} + 2 (q \epsilon)^2\right)^t\\
& = & q^t \left(\left(\frac{t}{\log t}\right)^{(q-1)/qt} + 2 \frac{\log^2 t}{t^2}\right)^t\\
& \leq & q^t \left(\frac{t}{\log t}\right)^{1-1/q} \left(1 + 2 \frac{\log^2 t}{t^2}\right)^t \\
& \leq & 3 q^t \left(\frac{t}{\log t}\right)^{1-1/q}. 
\end{eqnarray*}
In the last line we used that $\left(1 + 2 \frac{\log^2 t}{t^2}\right)^t \leq 3$. Similarly,
\[\left(\frac{1}{q} + \frac{\epsilon}{q-1}\right)^{-t} \leq 3 q^t \left(\frac{\log t}{t}\right)^{1/q}.\]
If now the density of the blue edges is greater than $\frac{1}{q} - \epsilon$, an application of Lemma \ref{KST} with $A =M'$, $B=N'$, $r = t$ and $s = 2 \log t$ tells us that there is a complete blue bipartite graph with $t$ vertices in $M'$ and $2 \log t$ vertices in $N'$. Otherwise we would have either $|M'| \leq 3 q t^2$ or
\[|N'| \leq 2 \left(\frac{1}{q} - \epsilon\right)^{-t} 2 \log t \leq 12 q^t t^{1-1/q} \log^{1/q} t.\]
Let $M''$ be the vertices of this complete bipartite graph which lie in $M'$. By construction, $|M''| = t$. Moreover, since the vertices in $M''$ have $t - 2 \log t$ joint neighbours in $N$ and $2 \log t$ joint neighbours in $N'$, we have a blue $K_{t,t}$.

If, on the other hand, one of the other colours, red say, has density greater than $\frac{1}{q} - \frac{\epsilon}{q-1}$, another application of Lemma \ref{KST} would imply that if the red bipartite graph between $M'$ and $N'$ did not contain a complete $K_{t,t}$, then
\[|N'| \leq 2 \left(\frac{1}{q} - \frac{\epsilon}{q-1}\right)^{-t} t \leq 6 q^t t^{1-1/q} \log^{1/q} t.\]
The theorem now follows since
\begin{eqnarray*}
\tilde{r}_q (K_{t,t}) & \leq & |M||N| + |M'||N'|\\ 
& \leq & (6 q^{t+1} \log t)(2 q t^2) + (3 q t^2)(12 q^t t^{1-1/q} \log^{1/q} t)\\
& \leq & 12 q^{t+2} t^2 \log t + 36 q^{t+1} t^{3-1/q} \log^{1/q} t\\
& \leq & 48 q^{t+2} t^{3-1/q} \log^{1/q} t.
\end{eqnarray*}
\end{proof} 
 
\section{Conclusion}

Naturally, the most interesting question in the field is still to show that 
\[\lim_{t \rightarrow \infty} \frac{\tilde{r}(t)}{\hat{r}(t)} = 0,\]
but, given that this is probably extremely difficult, what other questions might one consider?

One possibility is to look at graphs of bounded maximum degree. The simplest case, that of  
determining the on-line Ramsey number of paths, has already been considered by Grytczuk, Kierstead, and Pra\l at \cite{GKP08}, who proved that $\tilde{r}(P_n) \leq 4n-7$. This is a much sharper result than what is known for the ordinary size Ramsey number, where the best known constant is still in the hundreds \cite{B01}. Extensive computational work done by Pra\l at \cite{GKP08, P08, P208} suggests that the constant may be even smaller again. Indeed, for $4 \leq n \leq 9$, the bound $\lfloor \frac{5}{2} n - 5\rfloor$ is correct. It would be very interesting to know whether such a bound holds generally.

For graphs of bounded maximum degree, the best known upper bound is the same as the recent upper bound for the size Ramsey number, that is,
\[\tilde{r}(t) \leq c n^{2 - 1/\Delta} \log^{1/\Delta} n.\]  
On the other hand, we know almost nothing about the lower bound. Even the following question remains open.

\begin{problem}
Given a natural number $\Delta$, does there exist a constant $c$ such that, if $H$ is a graph on $n$ vertices with maximum degree $\Delta$,
\[\tilde{r} (G) \leq cn?\]
\end{problem}

Another direction one can take is to consider the on-line game under the additional restriction that Builder can only draw graphs contained within a given (monotone decreasing) class. The most impressive theorem in this direction, proved by Kierstead, Grytczuk, Ha\l uszczak and Konjevod over two papers \cite{GHK04, KK09}, is that Builder may restrict to graphs of chromatic number at most $t$ and still force Painter to draw a monochromatic $K_t$. The proof of this result is quite intricate and relies upon the analysis of an auxiliary Ramsey game played on hypergraphs.

One beautiful question of this variety, due to Butterfield, Grauman, Kinnersley, Milans, Stocker and West \cite{BGKMSW09}, is to determine whether, given a natural number $\Delta$, there exists $d(\Delta)$ such that Builder can force Painter to draw a monochromatic copy of any graph with maximum degree $\Delta$ by drawing only graphs of maximum degree at most $d(\Delta)$.

\vspace{2mm} \noindent {\bf Acknowledgements.} I would like to thank Jacob Fox for reading carefully through an earlier version of this paper and making several helpful suggestions.

\end{document}